\newcommand{\abs}[1]{\left|#1\right|}
\newcommand{\bdry}[1]{\partial #1}
\newcommand{\calN}{{\cal N}}
\newcommand{\closure}[1]{\overline{#1}}
\newcommand{\dint}{\ds{\int}}
\newcommand{\ds}[1]{\displaystyle #1}
\newcommand{\eps}{\varepsilon}
\newcommand{\norm}[2][]{\left\|#2\right\|_{#1}}
\renewcommand{\o}{\text{o}}
\newcommand{\PS}[1]{$(\text{PS})_{#1}$}
\newcommand{\R}{\mathbb R}
\newcommand{\seq}[1]{\left(#1\right)}
\newcommand{\set}[1]{\left\{#1\right\}}
\newcommand{\vol}[1]{\left|#1\right|}
\newcommand{\wstar}{\xrightarrow{w^\ast}}
\newenvironment{enumroman}{\begin{enumerate}

}{\end{enumerate}}
\newtheorem{lemma}{Lemma}[section]
\newtheorem{theorem}[lemma]{Theorem}
\newtheorem{remark}[lemma]{Remark}
\numberwithin{equation}{section}
\title{\vspace{-0.5in} \bf A class of semipositone $p$-Laplacian problems with a critical growth reaction term\thanks{{\em MSC2010:} Primary 35B33, Secondary 35J92, 35B09, 35B45
\newline \indent\; {\em Key Words and Phrases:} critical semipositone $p$-Laplacian problems, ground state positive solutions, concentration compactness, uniform $C^{1,\alpha}$ a priori estimates}}
\author{\bf Kanishka Perera\\
Department of Mathematical Sciences\\
Florida Institute of Technology\\
Melbourne, FL 32901, USA\\
\em kperera@fit.edu\\
[\bigskipamount]
\bf Ratnasingham Shivaji\\
Department of Mathematics and Statistics\\
University of North Carolina at Greensboro\\
Greensboro, NC 27412, USA\\
\em shivaji@uncg.edu\\
[\bigskipamount]
\bf Inbo Sim\\
Department of Mathematics\\
University of Ulsan\\
Ulsan 680-749, Republic of Korea\\
\em ibsim@ulsan.ac.kr}
\date{}
\begin{document}

\maketitle

\begin{abstract}
We prove the existence of ground state positive solutions for a class of semipositone $p$-Laplacian problems with a critical growth reaction term. The proofs are established by obtaining crucial uniform $C^{1,\alpha}$ a priori estimates and by concentration compactness arguments. Our results are new even in the semilinear case $p = 2$.\\[3pt]
\end{abstract}

\section{Introduction}

Consider the $p$-superlinear semipositone $p$-Laplacian problem
\begin{equation} \label{1}
\left\{\begin{aligned}
- \Delta_p\, u & = u^{q-1} - \mu && \text{in } \Omega\\[5pt]
u & > 0 && \text{in } \Omega\\[5pt]
u & = 0 && \text{on } \bdry{\Omega},
\end{aligned}\right.
\end{equation}
where $\Omega$ is a smooth bounded domain in $\R^N$, $1 < p < N$, $p < q \le p^\ast$, $\mu > 0$ is a parameter, and $p^\ast = Np/(N - p)$ is the critical Sobolev exponent. The scaling $u \mapsto \mu^{1/(q-1)}\, u$ transforms the first equation in \eqref{1} into
\[
- \Delta_p\, u = \mu^{(q-p)/(q-1)} \left(u^{q-1} - 1\right),
\]
so in the subcritical case $q < p^\ast$, it follows from the results in Castro et al.\! \cite{MR3507282} and Chhetri et al.\! \cite{MR3406455} that this problem has a weak positive solution for sufficiently small $\mu > 0$ when $p > 1$ (see also Unsurangie \cite{Unsurangie}, Allegretto et al.\! \cite{MR1141729}, Ambrosetti et al.\! \cite{MR1270096}, and Caldwell et al.\! \cite{MR2328697} for the case when $p = 2$). On the other hand, in the critical case $q = p^\ast$, it follows from a standard argument involving the Pohozaev identity for the $p$-Laplacian (see Guedda and V{\'e}ron \cite[Theorem 1.1]{MR1009077}) that problem \eqref{1} has no solution for any $\mu > 0$ when $\Omega$ is star-shaped. The purpose of the present paper is to show that this situation can be reversed by the addition of lower-order terms, as was observed in the positone case by Br{\'e}zis and Nirenberg in the celebrated paper \cite{MR709644}. However, this extension to the semipositone case is not straightforward as $u = 0$ is no longer a subsolution, making it much harder to find a positive solution as was pointed out in Lions \cite{MR678562}. The positive solutions that we obtain here are ground states, i.e., they minimize the energy among all positive solutions.

We study the Br{\'e}zis-Nirenberg type critical semipositone $p$-Laplacian problem
\begin{equation} \label{2}
\left\{\begin{aligned}
- \Delta_p\, u & = \lambda u^{p-1} + u^{p^\ast - 1} - \mu && \text{in } \Omega\\[5pt]
u & > 0 && \text{in } \Omega\\[5pt]
u & = 0 && \text{on } \bdry{\Omega},
\end{aligned}\right.
\end{equation}
where $\lambda, \mu > 0$ are parameters. Let $W^{1,p}_0(\Omega)$ be the usual Sobolev space with the norm given by
\[
\norm{u}^p = \int_\Omega |\nabla u|^p\, dx.
\]
For a given $\lambda > 0$, the energy of a weak solution $u \in W^{1,p}_0(\Omega)$ of problem \eqref{2} is given by
\[
I_\mu(u) = \int_\Omega \bigg(\frac{|\nabla u|^p}{p} - \frac{\lambda u^p}{p} - \frac{u^{p^\ast}}{p^\ast} + \mu u\bigg)\, dx,
\]
and clearly all weak solutions lie on the set
\[
\calN_\mu = \set{u \in W^{1,p}_0(\Omega) : u > 0 \text{ in } \Omega \text{ and } \int_\Omega |\nabla u|^p\, dx = \int_\Omega \left(\lambda u^p + u^{p^\ast} - \mu u\right) dx}.
\]
We will refer to a weak solution that minimizes $I_\mu$ on $\calN_\mu$ as a ground state. Let
\begin{equation} \label{3}
\lambda_1 = \inf_{u \in W^{1,p}_0(\Omega) \setminus \set{0}}\, \frac{\dint_\Omega |\nabla u|^p\, dx}{\dint_\Omega |u|^p\, dx}
\end{equation}
be the first Dirichlet eigenvalue of the $p$-Laplacian, which is positive. We will prove the following existence theorem.

\begin{theorem} \label{Theorem 1}
If $N \ge p^2$ and $\lambda \in (0,\lambda_1)$, then there exists $\mu^\ast > 0$ such that for all $\mu \in (0,\mu^\ast)$, problem \eqref{2} has a ground state solution $u_\mu \in C^{1,\alpha}(\closure{\Omega})$ for some $\alpha \in (0,1)$.
\end{theorem}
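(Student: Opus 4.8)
\section*{Proof proposal}

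The ground state is by definition a minimizer of $I_\mu$ over $\calN_\mu$, so the plan is to show that this infimum is attained by a function that is genuinely positive and solves \eqref{2}. Two obstructions must be overcome: the critical exponent makes the embedding $W^{1,p}_0(\Omega) \hookrightarrow L^{p^\ast}(\Omega)$ noncompact, and, as Lions observed, the sign-changing forcing $-\mu$ makes positivity delicate because $u = 0$ is not a subsolution. To handle positivity I would first pass to a modified problem, replacing the reaction term by a nonlinearity $f_\mu$ that coincides with $\lambda u^{p-1} + u^{p^\ast - 1} - \mu$ where $u$ is not too small but is altered near $u = 0$, so that the associated energy has mountain pass structure and its nontrivial critical points are nonnegative; the task is then to produce a least-energy critical point lying below the first noncompactness level and to show that, for $\mu$ small, it is strictly positive and unaffected by the modification, hence a ground state of \eqref{2}.

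For this modified functional I would verify the mountain pass geometry. Since $\lambda \in (0,\lambda_1)$, the characterization \eqref{3} gives $\int_\Omega \big(|\nabla u|^p - \lambda |u|^p\big)\, dx \ge (1 - \lambda/\lambda_1)\, \norm{u}^p$, so the origin is a strict local minimum, while the critical term drives the energy to $-\infty$ along rays. This produces a min-max level $c_\mu$, which coincides with the least energy on the modified Nehari set. The decisive estimate is $c_\mu < \tfrac{1}{N}\, S^{N/p}$, where $S$ is the best Sobolev constant, normalized by $\norm[p^\ast]{u}^p \le S^{-1} \norm{u}^p$. I would establish it by evaluating the functional on truncated Talenti instantons concentrated at an interior point of $\Omega$: the hypothesis $N \ge p^2$ is exactly what makes the energy gain from the $-\lambda u^p$ term dominate the concentration error and push the level below $\tfrac{1}{N}\, S^{N/p}$, while the semipositone contribution $\mu \int_\Omega u\, dx$ is of lower order and is absorbed for small $\mu$.

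Being strictly below the threshold restores compactness. A minimizing Palais-Smale sequence at level $c_\mu$ is bounded in $W^{1,p}_0(\Omega)$, and by Lions' second concentration-compactness lemma for the $p$-Laplacian any concentration of the gradient and critical measures at a point would contribute at least $\tfrac{1}{N}\, S^{N/p}$ to the limiting energy; since $c_\mu < \tfrac{1}{N}\, S^{N/p}$, this cannot occur. Hence no mass escapes, the sequence converges strongly, and the limit $u_\mu$ is a nonnegative critical point of the modified functional with energy $c_\mu$.

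The heart of the argument, and the step I expect to be the main obstacle, is to upgrade $u_\mu$ to a positive solution of \eqref{2} that ignores the modification. For this I would prove uniform $C^{1,\alpha}(\closure{\Omega})$ a priori estimates, bounding the solutions $u_\mu$ in $C^{1,\alpha}(\closure{\Omega})$ independently of $\mu$. Then, by the Arzel{\`a}-Ascoli theorem, a sequence $u_{\mu_n}$ with $\mu_n \to 0^+$ converges in $C^1(\closure{\Omega})$ to a solution $u_0$ of the limiting Br{\'e}zis-Nirenberg problem at $\mu = 0$. Because the levels $c_{\mu_n}$ stay bounded away from zero, $u_0 \not\equiv 0$, so $u_0 > 0$ in $\Omega$ with $\partial u_0/\partial\nu < 0$ on $\bdry{\Omega}$ by the strong maximum principle and the boundary point lemma. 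The $C^1$-convergence then forces $u_\mu > 0$ in $\Omega$ and renders the modification inactive for all sufficiently small $\mu$, so that $u_\mu$ solves \eqref{2}; being of least energy, it is a ground state.
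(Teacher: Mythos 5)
Your outline reproduces the paper's architecture (truncated semipositone term, mountain pass below the noncompactness threshold via truncated instantons with $N \ge p^2$, concentration compactness, $C^{1,\alpha}$ bounds, passage to the limit $\mu \to 0$, positivity via the strong maximum principle and Hopf lemma), but it has a genuine gap at precisely the step you flag as the heart of the matter: you assert uniform $C^{1,\alpha}(\closure{\Omega})$ a priori estimates without any mechanism for proving them, and no standard elliptic regularity theorem delivers them here. Because the growth is critical, a family of solutions bounded in $W^{1,p}_0(\Omega)$ can still bubble, so uniform $L^\infty$ --- let alone $C^{1,\alpha}$ --- bounds can fail; ruling this out is the actual content. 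The paper's route is: (a) a uniform $W^{1,p}_0(\Omega)$ bound coming from the \emph{uniform} level estimate $c_\mu + (1 - 1/p)\,\mu \vol{\Omega} \le \beta < \frac{1}{N} S^{N/p}$ of Lemma \ref{Lemma 2}; (b) uniform equi-integrability of $|u_\mu|^{p^\ast}$, proved by a second, nonstandard application of the concentration compactness principle to an arbitrary sequence $\mu_j \to 0$: a Dirac mass would force $\nu_i \ge S^{N/p}$, contradicting $\nu_i \le N\beta < S^{N/p}$; and (c) the result of de Figueiredo, Gossez and Ubilla \cite[Proposition 3.7]{MR2530603}, which converts exactly these two ingredients into a uniform $C^{1,\alpha}$ bound. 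Your proposal invokes concentration compactness only once, for the Palais--Smale condition at fixed $\mu$; the second application, uniform in $\mu$, is what your plan is missing, and without it the Arzel\`a--Ascoli step has no foundation.

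Two further claims in your write-up are unjustified and should be repaired or dropped. First, you claim the modification can be arranged so that ``nontrivial critical points are nonnegative'': this is false in spirit --- the semipositone term $-\mu f(u)$ with $f(0) = 1 > 0$ is exactly what prevents $u = 0$ from being a subsolution, and the paper never claims nonnegativity of critical points of the modified functional; positivity is recovered only in the limit $\mu \to 0$ via the Hopf lemma, as your final paragraph correctly does. (Relatedly, the correct compactness threshold is $\frac{1}{N} S^{N/p} - (1 - 1/p)\, \mu \vol{\Omega}$, shifted by the possible negative-part mass, not $\frac{1}{N} S^{N/p}$.) Second, your assertion that $c_\mu$ ``coincides with the least energy on the modified Nehari set'' is both unproved and not what is needed: for sign-changing $u$ the fibering map of the modified functional is not a clean polynomial in $t$, and the ground state property in this paper means minimality of $I_\mu$ over $\calN_\mu$, the set of \emph{positive} functions satisfying the Nehari-type identity. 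The paper proves $c_\mu \le I_\mu(w)$ for each $w \in \calN_\mu$ directly, by constructing an admissible path through $w$: since $w > 0$ and $\lambda < \lambda_1$, the map $t \mapsto I_\mu(tw)$ has its unique maximum at $t = 1$, and the ray through $w$ is then joined to $R v_\eps$ along a segment at large amplitude where the energy is nonpositive. Some such argument is required before you may conclude from the mountain pass characterization that $u_\mu$ is a ground state.
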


The scaling $u \mapsto \mu^{-1/(p^\ast - p)}\, u$ transforms the first equation in the critical semipositone $p$-Laplacian problem
\begin{equation} \label{31}
\left\{\begin{aligned}
- \Delta_p\, u & = \lambda u^{p-1} + \mu \left(u^{p^\ast - 1} - 1\right) && \text{in } \Omega\\[5pt]
u & > 0 && \text{in } \Omega\\[5pt]
u & = 0 && \text{on } \bdry{\Omega}
\end{aligned}\right.
\end{equation}
into
\[
- \Delta_p\, u = \lambda u^{p-1} + u^{p^\ast - 1} - \mu^{(p^\ast - 1)/(p^\ast - p)},
\]
so as an immediate corollary we have the following existence theorem for problem \eqref{31}.

\begin{theorem} \label{Theorem 2}
If $N \ge p^2$ and $\lambda \in (0,\lambda_1)$, then there exists $\mu^\ast > 0$ such that for all $\mu \in (0,\mu^\ast)$, problem \eqref{31} has a ground state solution $u_\mu \in C^{1,\alpha}(\closure{\Omega})$ for some $\alpha \in (0,1)$.
\end{theorem}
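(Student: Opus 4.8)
The plan is to obtain this statement directly from Theorem~\ref{Theorem 1} by making the scaling indicated in the text rigorous. Write $t = \mu^{-1/(p^\ast - p)} > 0$ and look for a solution of \eqref{31} of the form $u = t\, w$. Since the $p$-Laplacian is $(p-1)$-homogeneous, $-\Delta_p(t w) = t^{p-1}\, (-\Delta_p w)$, so substituting $u = t w$ into the first equation of \eqref{31} and dividing through by $t^{p-1}$ gives
\[
- \Delta_p\, w = \lambda w^{p-1} + \mu\, t^{p^\ast - p}\, w^{p^\ast - 1} - \mu\, t^{1-p}.
\]
Because $t^{p^\ast - p} = \mu^{-1}$ we have $\mu\, t^{p^\ast - p} = 1$, and $\mu\, t^{1-p} = \mu^{(p^\ast - 1)/(p^\ast - p)} =: \tilde\mu$, so $w$ satisfies exactly the first equation of \eqref{2} with the parameter $\mu$ replaced by $\tilde\mu$. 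Thus $u \mapsto t^{-1} u = \mu^{1/(p^\ast-p)}\, u$ is a bijection carrying positive weak solutions of \eqref{31} with parameter $\mu$ onto positive weak solutions of \eqref{2} with parameter $\tilde\mu$, with the boundary condition, positivity, and $C^{1,\alpha}$ regularity all preserved since we are only multiplying by a fixed positive constant.

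Next I would fix the threshold. The map $\mu \mapsto \tilde\mu = \mu^{(p^\ast - 1)/(p^\ast - p)}$ is a continuous, strictly increasing bijection of $(0,\infty)$ onto itself, so if $\tilde\mu^\ast$ denotes the threshold furnished by Theorem~\ref{Theorem 1} for problem \eqref{2}, then setting $\mu^\ast = (\tilde\mu^\ast)^{(p^\ast - p)/(p^\ast - 1)}$ gives $\mu \in (0,\mu^\ast) \iff \tilde\mu \in (0,\tilde\mu^\ast)$. For such $\mu$, let $w \in C^{1,\alpha}(\closure{\Omega})$ be the ground state solution of \eqref{2} with parameter $\tilde\mu$ provided by Theorem~\ref{Theorem 1}, and set $u_\mu = t\, w \in C^{1,\alpha}(\closure{\Omega})$; by the previous paragraph $u_\mu$ is a positive weak solution of \eqref{31}.

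It remains to verify that $u_\mu$ is a ground state. The energy associated with \eqref{31} is
\[
E_\mu(u) = \int_\Omega \bigg(\frac{|\nabla u|^p}{p} - \frac{\lambda u^p}{p} - \frac{\mu\, u^{p^\ast}}{p^\ast} + \mu u\bigg)\, dx,
\]
and I would compute $E_\mu(t w)$ term by term using $\int_\Omega |\nabla (tw)|^p\, dx = t^p \int_\Omega |\nabla w|^p\, dx$, and similarly for the remaining terms. The identities $\mu\, t^{p^\ast} = t^p$ and $\mu\, t = t^p\, \tilde\mu$ (both of which follow from $t^{p^\ast-p} = \mu^{-1}$) make the lower-order terms align precisely, leaving $E_\mu(t w) = t^p\, I_{\tilde\mu}(w)$, where $I_{\tilde\mu}$ is the energy of \eqref{2}. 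Since $w \mapsto t w$ is a bijection between the positive weak solution sets of the two problems and scales the energy by the single positive constant $t^p$, it carries minimizers to minimizers; as $w$ minimizes $I_{\tilde\mu}$ over the positive solutions of \eqref{2}, $u_\mu$ minimizes $E_\mu$ over the positive solutions of \eqref{31}, i.e.\ $u_\mu$ is a ground state.

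There is no real analytic obstacle here, since all of the substantive work — existence, the concentration-compactness analysis, and the uniform $C^{1,\alpha}$ a priori estimates — is already contained in Theorem~\ref{Theorem 1}. The only point demanding care is the exponent bookkeeping in the last paragraph: one must confirm that, after multiplying by $t^p$, the transformed constant ($-\mu$) and linear ($+\mu u$) terms match exactly those of problem \eqref{2}, so that the energy scales by a single positive factor and the ground-state minimization property transfers cleanly.
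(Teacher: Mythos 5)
Your proposal is correct and takes essentially the same route as the paper, which obtains Theorem \ref{Theorem 2} as an immediate corollary of Theorem \ref{Theorem 1} via precisely the scaling $u = \mu^{-1/(p^\ast - p)}\, w$ noted before the theorem statement. Your exponent bookkeeping ($\mu\, t^{p^\ast - p} = 1$, $\mu\, t^{1-p} = \mu^{(p^\ast - 1)/(p^\ast - p)}$) and the energy identity $E_\mu(t w) = t^p\, I_{\tilde\mu}(w)$, which the paper leaves implicit, are verified correctly and make the transfer of the ground-state property explicit.
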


We would like to emphasize that Theorems \ref{Theorem 1} and \ref{Theorem 2} are new even in the semilinear case $p = 2$.

The outline of the proof of Theorem \ref{Theorem 1} is as follows. We consider the modified problem
\begin{equation} \label{4}
\left\{\begin{aligned}
- \Delta_p\, u & = \lambda u_+^{p-1} + u_+^{p^\ast - 1} - \mu\, f(u) && \text{in } \Omega\\[5pt]
u & = 0 && \text{on } \bdry{\Omega},
\end{aligned}\right.
\end{equation}
where $u_+(x) = \max \set{u(x),0}$ and
\[
f(t) = \begin{cases}
1, & t \ge 0\\[5pt]
1 - |t|^{p-1}, & -1 < t < 0\\[5pt]
0, & t \le -1.
\end{cases}
\]
Weak solutions of this problem coincide with critical points of the $C^1$-functional
\begin{multline*}
I_\mu(u) = \int_\Omega \bigg(\frac{|\nabla u|^p}{p} - \frac{\lambda u_+^p}{p} - \frac{u_+^{p^\ast}}{p^\ast}\bigg)\, dx + \mu\, \bigg[\int_{\set{u \ge 0}} u\, dx\\[10pt]
+ \int_{\set{-1 < u < 0}} \left(u - \frac{|u|^{p-1}\, u}{p}\right) dx - \left(1 - \frac{1}{p}\right) \vol{\set{u \le -1}}\bigg], \quad u \in W^{1,p}_0(\Omega),
\end{multline*}
where $\vol{\cdot}$ denotes the Lebesgue measure in $\R^N$. Recall that $I_\mu$ satisfies the Palais-Smale compactness condition at the level $c \in \R$, or the \PS{c} condition for short, if every sequence $\seq{u_j} \subset W^{1,p}_0(\Omega)$ such that $I_\mu(u_j) \to c$ and $I_\mu'(u_j) \to 0$, called a \PS{c} sequence for $I_\mu$, has a convergent subsequence. As we will see in Lemma \ref{Lemma 1} in the next section, it follows from concentration compactness arguments that $I_\mu$ satisfies the \PS{c} condition for all
\[
c < \frac{1}{N}\, S^{N/p} - \left(1 - \frac{1}{p}\right) \mu \vol{\Omega},
\]
where $S$ is the best Sobolev constant (see \eqref{5}). First we will construct a mountain pass level below this threshold for compactness for all sufficiently small $\mu > 0$. This part of the proof is more or less standard. The novelty of the paper lies in the fact that the solution $u_\mu$ of the modified problem \eqref{4} thus obtained is positive, and hence also a solution of our original problem \eqref{2}, if $\mu$ is further restricted. Note that this does not follow from the strong maximum principle as usual since $- \mu\, f(0) < 0$. This is precisely the main difficulty in finding positive solutions of semipositone problems (see Lions \cite{MR678562}). We will prove that for every sequence $\mu_j \to 0$, a subsequence of $u_{\mu_j}$ is positive in $\Omega$. The idea is to show that a subsequence of $u_{\mu_j}$ converges in $C^1_0(\closure{\Omega})$ to a solution of the limit problem
\[
\left\{\begin{aligned}
- \Delta_p\, u & = \lambda u^{p-1} + u^{p^\ast - 1} && \text{in } \Omega\\[5pt]
u & > 0 && \text{in } \Omega\\[5pt]
u & = 0 && \text{on } \bdry{\Omega}.
\end{aligned}\right.
\]
This requires a uniform $C^{1,\alpha}(\closure{\Omega})$ estimate of $u_{\mu_j}$ for some $\alpha \in (0,1)$. We will obtain such an estimate by showing that $u_{\mu_j}$ is uniformly bounded in $W^{1,p}_0(\Omega)$ and uniformly equi-integrable in $L^{p^\ast}(\Omega)$, and applying a result of de Figueiredo et al.\! \cite{MR2530603}. The proof of uniform equi-integrability in $L^{p^\ast}(\Omega)$ involves a second (nonstandard) application of the concentration compactness principle. Finally, we use the mountain pass characterization of our solution to show that it is indeed a ground state.

\begin{remark}
Establishing the existence of solutions to the critical semipositone problem
\[
\left\{\begin{aligned}
- \Delta_p\, u & = \mu \left(u^{p-1} + u^{p^\ast-1}-1\right) && \text{in } \Omega\\[5pt]
u & > 0 && \text{in } \Omega\\[5pt]
u & = 0 && \text{on } \bdry{\Omega}
\end{aligned}\right.
\]
for small $\mu$ remains open.
\end{remark}

\section{Preliminaries}

Let
\begin{equation} \label{5}
S = \inf_{u \in W^{1,p}_0(\Omega) \setminus \set{0}}\, \frac{\dint_\Omega |\nabla u|^p\, dx}{\left(\dint_\Omega |u|^{p^\ast}\, dx\right)^{p/p^\ast}}
\end{equation}
be the best constant in the Sobolev inequality, which is independent of $\Omega$. The proof of Theorem \ref{Theorem 1} will make use of the following compactness result.

\begin{lemma} \label{Lemma 1}
For any fixed $\lambda, \mu > 0$, $I_\mu$ satisfies the {\em \PS{c}} condition for all
\begin{equation} \label{6}
c < \frac{1}{N}\, S^{N/p} - \left(1 - \frac{1}{p}\right) \mu \vol{\Omega}.
\end{equation}
\end{lemma}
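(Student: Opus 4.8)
The plan is to take a \PS{c} sequence $\seq{u_j}$, prove it is bounded, extract a weakly convergent subsequence, and then invoke the concentration compactness principle twice — once to locate possible points of concentration and once to rule them out via the energy level — before upgrading weak to strong convergence. For the boundedness I would fix $\theta \in (1/p^\ast, 1/p)$ and examine $I_\mu(u_j) - \theta\, \langle I_\mu'(u_j), u_j\rangle$. The gradient term contributes $(1/p - \theta)\, \norm{u_j}^p$ with positive coefficient, the critical term contributes $(\theta - 1/p^\ast) \dint_\Omega u_{j,+}^{p^\ast}\, dx$ with nonnegative coefficient, and the subcritical term contributes $- \lambda\, (1/p - \theta) \dint_\Omega u_{j,+}^p\, dx$. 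Since $p < p^\ast$, Young's inequality absorbs this last negative term into the critical term up to a constant multiple of $\vol{\Omega}$, so no restriction $\lambda < \lambda_1$ is needed here — consistent with the hypothesis. A direct inspection shows that every $\mu$-term is either nonnegative (the part on $\set{u_j \ge 0}$) or bounded below by a constant times $\mu\vol{\Omega}$ (the parts on $\set{u_j < 0}$, where $u_j$ is confined to $[-1,0)$ or $f$ vanishes). Using $I_\mu(u_j) = c + \o(1)$ and $\abs{\langle I_\mu'(u_j), u_j\rangle} \le \eps_j \norm{u_j}$ with $\eps_j \to 0$, one gets $(1/p - \theta)\, \norm{u_j}^p \le c + C + \o(1) + \theta \eps_j \norm{u_j}$, and boundedness follows since $p > 1$. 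Passing to a subsequence, $u_j \rightharpoonup u$ in $W^{1,p}_0(\Omega)$, $u_j \to u$ in $L^r(\Omega)$ for every $r < p^\ast$ and a.e.; a standard argument (a.e.\ convergence of the gradients) shows that $u$ is a critical point of $I_\mu$.

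Next I would apply the second concentration compactness lemma of Lions to $\seq{u_{j,+}}$: up to a subsequence there are an at most countable set $\set{x_k}$ and nonnegative weights $\set{\eta_k}, \set{\nu_k}$ with $\abs{\nabla u_j}^p \wstar \eta \ge \abs{\nabla u}^p + \sum_k \eta_k\, \delta_{x_k}$ and $u_{j,+}^{p^\ast} \wstar \nu = u_+^{p^\ast} + \sum_k \nu_k\, \delta_{x_k}$, together with $\eta_k \ge S\, \nu_k^{p/p^\ast}$. Testing $I_\mu'(u_j) \to 0$ against $\phi_\eps u_j$, where $\phi_\eps$ is a cutoff supported near $x_k$, and letting first $j \to \infty$ and then $\eps \to 0$, the subcritical term, the semipositone term, and the gradient cross terms all vanish, forcing $\eta_k = \nu_k$. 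Combined with $\eta_k \ge S\, \nu_k^{p/p^\ast}$ and the identity $1 - p/p^\ast = p/N$, this yields the usual dichotomy $\nu_k = 0$ or $\nu_k \ge S^{N/p}$ at each point.

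The decisive step, and the one I expect to be the main obstacle, is ruling out concentration by means of the threshold \eqref{6}. Since $\langle I_\mu'(u_j), u_j\rangle \to 0$, we have $c = \lim_j \big[I_\mu(u_j) - \tfrac{1}{p^\ast}\, \langle I_\mu'(u_j), u_j\rangle\big] = \tfrac1N \lim_j \norm{u_j}^p - \tfrac{\lambda}{N} \dint_\Omega u_+^p\, dx + \mu\, M(u)$, where $M(u)$ is the limit of the (strongly convergent) $\mu$-terms. If some $\nu_{k_0} \ge S^{N/p}$, then comparing total masses gives $\lim_j \norm{u_j}^p \ge \norm{u}^p + \eta_{k_0} \ge \norm{u}^p + S^{N/p}$, whence $c \ge \tfrac1N S^{N/p} + I_\mu(u)$, because the remaining terms reassemble, via $\langle I_\mu'(u), u\rangle = 0$, into $I_\mu(u) = I_\mu(u) - \tfrac{1}{p^\ast}\, \langle I_\mu'(u), u\rangle$. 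The crux is then the lower bound $I_\mu(u) \ge -\, (1 - \tfrac1p)\, \mu \vol{\Omega}$, which I would extract from the identity $I_\mu(u) = I_\mu(u) - \tfrac1p\, \langle I_\mu'(u), u\rangle$: there the gradient and subcritical contributions cancel exactly, the critical contribution $\tfrac1N \dint_\Omega u_+^{p^\ast}\, dx$ is nonnegative, the $\mu$-part on $\set{u \ge 0}$ is nonnegative, and the $\mu$-part on $\set{u < 0}$ collapses — after the cancellations built into $f$ — to $\mu\, (1 - \tfrac1p) \dint_{\set{u < 0}} u\, dx - \mu\, (1 - \tfrac1p)\, \vol{\set{u \le -1}}$, which is bounded below by $-\, (1 - \tfrac1p)\, \mu \vol{\Omega}$. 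Thus $c \ge \tfrac1N S^{N/p} - (1 - \tfrac1p)\, \mu \vol{\Omega}$, contradicting \eqref{6}; hence every $\nu_k = 0$ and no concentration occurs.

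Finally, with no concentration and $\Omega$ bounded, $\dint_\Omega u_{j,+}^{p^\ast}\, dx \to \dint_\Omega u_+^{p^\ast}\, dx$, so the Brezis--Lieb lemma gives $u_{j,+} \to u_+$ in $L^{p^\ast}(\Omega)$. Consequently $\lambda u_{j,+}^{p-1} + u_{j,+}^{p^\ast - 1} - \mu\, f(u_j)$ converges strongly in the dual $W^{-1,p'}(\Omega)$, and testing $\langle I_\mu'(u_j), u_j - u\rangle \to 0$ yields $\dint_\Omega \abs{\nabla u_j}^{p-2}\, \nabla u_j \cdot \nabla(u_j - u)\, dx \to 0$. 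The $(S_+)$ property of $- \Delta_p$ then upgrades $u_j \rightharpoonup u$ to $u_j \to u$ in $W^{1,p}_0(\Omega)$, establishing the \PS{c} condition.
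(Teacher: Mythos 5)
Your proposal is correct in substance and follows the paper's overall skeleton (boundedness via $I_\mu - \theta\, I_\mu'$, Lions' concentration compactness applied to $\seq{u_{j+}}$, a cutoff test comparing atom masses, exclusion of atoms via the level, and an $(S_+)$ upgrade to strong convergence), but your decisive exclusion step runs along a genuinely different line. The paper never needs the weak limit $u$ to be a critical point: it simply passes to the limit in the sequence inequality \eqref{9}, i.e.\ in $I_\mu(u_j) - \tfrac1p\, I_\mu'(u_j)\, u_j$, which bounds the \emph{total mass} of $\nu$ and yields $\nu_i \le N\big[c + \big(1-\tfrac1p\big)\mu \vol{\Omega}\big] < S^{N/p}$ directly, against $\nu_i \ge S^{N/p}$. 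You instead prove the energy-splitting bound $c \ge \tfrac1N S^{N/p} + I_\mu(u)$ via the gradient measure and then bound $I_\mu(u) = I_\mu(u) - \tfrac1p\, I_\mu'(u)\, u \ge -\big(1-\tfrac1p\big)\mu\vol{\Omega}$. This is valid and buys the standard quantization picture (useful for multiplicity or sign-changing extensions), but it costs two ingredients the paper's route avoids: you must first establish $I_\mu'(u) = 0$, which requires the Boccardo--Murat-type a.e.\ gradient convergence you only gesture at, and you must identify the limit $M$ of the $\mu$-terms with the $\mu$-part at $u$ --- this works because the combined integrand is continuous across $t = -1$ (precisely by the design of $f$), a point worth checking since $\vol{\set{u_j \le -1}} \to \vol{\set{u \le -1}}$ alone can fail when $\vol{\set{u = -1}} > 0$. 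Three small repairs: the collapsed $\mu$-part should read $\mu\big(1-\tfrac1p\big)\int_{\set{-1<u<0}} u\, dx - \mu\big(1-\tfrac1p\big)\vol{\set{u \le -1}}$, not $\int_{\set{u<0}} u\, dx$ (as written your expression is not bounded below by $-\big(1-\tfrac1p\big)\mu\vol{\Omega}$, since $u$ is unbounded below); the cutoff test gives only the one-sided bound $\eta_k \le \nu_k$, not equality, which is all you need; and the concentration compactness lemma should be applied consistently to $\seq{u_{j+}}$ (your displays mix $\nabla u_j$ with $u_{j+}$), after which $\lim_j \norm{u_j}^p \ge \norm{u}^p + \eta_{k_0}$ follows by splitting positive and negative parts and using weak lower semicontinuity on the latter. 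Your boundedness argument with $\theta \in (1/p^\ast, 1/p)$ and Young's inequality is a clean equivalent of the paper's $\theta = 1/p$ plus H\"older, and correctly avoids any use of $\lambda < \lambda_1$.
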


\begin{proof}
Let $\seq{u_j}$ be a \PS{c} sequence. First we show that $\seq{u_j}$ is bounded. We have
\begin{multline} \label{7}
I_\mu(u_j) = \int_\Omega \bigg(\frac{|\nabla u_j|^p}{p} - \frac{\lambda u_{j+}^p}{p} - \frac{u_{j+}^{p^\ast}}{p^\ast}\bigg)\, dx + \mu\, \bigg[\int_{\set{u_j \ge 0}} u_j\, dx\\[10pt]
+ \int_{\set{-1 < u_j < 0}} \left(u_j - \frac{|u_j|^{p-1}\, u_j}{p}\right) dx - \left(1 - \frac{1}{p}\right) \vol{\set{u_j \le -1}}\bigg] = c + \o(1)
\end{multline}
and
\begin{multline} \label{8}
I_\mu'(u_j)\, v = \int_\Omega \left(|\nabla u_j|^{p-2}\, \nabla u_j \cdot \nabla v - \lambda u_{j+}^{p-1}\, v - u_{j+}^{p^\ast - 1}\, v\right) dx + \mu\, \bigg[\int_{\set{u_j \ge 0}} v\, dx\\[10pt]
+ \int_{\set{-1 < u_j < 0}} \left(1 - |u_j|^{p-1}\right) v\, dx\bigg] = \o(1) \norm{v} \quad \forall v \in W^{1,p}_0(\Omega).
\end{multline}
Taking $v = u_j$ in \eqref{8}, dividing by $p$, and subtracting from \eqref{7} gives
\begin{equation} \label{9}
\frac{1}{N} \int_\Omega u_{j+}^{p^\ast}\, dx \le c + \left(1 - \frac{1}{p}\right) \mu \vol{\Omega} + \o(1) \left(\norm{u_j} + 1\right),
\end{equation}
and it follows from this, \eqref{7}, and the H\"{o}lder inequality that $\seq{u_j}$ is bounded in $W^{1,p}_0(\Omega)$.

Since $\seq{u_j}$ is bounded, so is $\seq{u_{j+}}$, a renamed subsequence of which then converges to some $v \ge 0$ weakly in $W^{1,p}_0(\Omega)$, strongly in $L^q(\Omega)$ for all $q \in [1,p^\ast)$ and a.e.\! in $\Omega$, and
\begin{equation} \label{10}
|\nabla u_{j+}|^p\, dx \wstar \kappa, \qquad u_{j+}^{p^\ast}\, dx \wstar \nu
\end{equation}
in the sense of measures, where $\kappa$ and $\nu$ are bounded nonnegative measures on $\closure{\Omega}$ (see, e.g., Folland \cite{MR1681462}). By the concentration compactness principle of Lions \cite{MR834360,MR850686}, then there exist an at most countable index set $I$ and points $x_i \in \closure{\Omega},\, i \in I$ such that
\begin{equation} \label{11}
\kappa \ge |\nabla v|^p\, dx + \sum_{i \in I} \kappa_i\, \delta_{x_i}, \qquad \nu = v^{p^\ast}\, dx + \sum_{i \in I} \nu_i\, \delta_{x_i},
\end{equation}
where $\kappa_i, \nu_i > 0$ and $\nu_i^{p/p^\ast} \le \kappa_i/S$. We claim that $I = \emptyset$. Suppose by contradiction that there exists $i \in I$. Let $\varphi : \R^N \to [0,1]$ be a smooth function such that $\varphi(x) = 1$ for $|x| \le 1$ and $\varphi(x) = 0$ for $|x| \ge 2$. Then set
\[
\varphi_{i,\rho}(x) = \varphi\left(\frac{x - x_i}{\rho}\right), \quad x \in \R^N
\]
for $i \in I$ and $\rho > 0$, and note that $\varphi_{i,\rho} : \R^N \to [0,1]$ is a smooth function such that $\varphi_{i,\rho}(x) = 1$ for $|x - x_i| \le \rho$ and $\varphi_{i,\rho}(x) = 0$ for $|x - x_i| \ge 2 \rho$. The sequence $\seq{\varphi_{i,\rho}\, u_{j+}}$ is bounded in $W^{1,p}_0(\Omega)$ and hence taking $v = \varphi_{i,\rho}\, u_{j+}$ in \eqref{8} gives
\begin{multline} \label{12}
\int_\Omega \big(\varphi_{i,\rho}\, |\nabla u_{j+}|^p + u_{j+}\, |\nabla u_{j+}|^{p-2}\, \nabla u_{j+} \cdot \nabla \varphi_{i,\rho} - \lambda\, \varphi_{i,\rho}\, u_{j+}^p - \varphi_{i,\rho}\, u_{j+}^{p^\ast}\\[5pt]
+ \mu\, \varphi_{i,\rho}\, u_{j+}\big)\, dx = \o(1).
\end{multline}
By \eqref{10},
\[
\int_\Omega \varphi_{i,\rho}\, |\nabla u_{j+}|^p\, dx \to \int_\Omega \varphi_{i,\rho}\, d\kappa, \qquad \int_\Omega \varphi_{i,\rho}\, u_{j+}^{p^\ast}\, dx \to \int_\Omega \varphi_{i,\rho}\, d\nu.
\]
Denoting by $C$ a generic positive constant independent of $j$ and $\rho$,
\begin{multline*}
\abs{\int_\Omega \big(u_{j+}\, |\nabla u_{j+}|^{p-2}\, \nabla u_{j+} \cdot \nabla \varphi_{i,\rho} - \lambda\, \varphi_{i,\rho}\, u_{j+}^p + \mu\, \varphi_{i,\rho}\, u_{j+}\big)\, dx}\\[10pt]
\le C \left[\left(\frac{1}{\rho} + \mu\right) I_j^{1/p} + I_j\right],
\end{multline*}
where
\[
I_j := \int_{\Omega \cap B_{2 \rho}(x_i)} u_{j+}^p\, dx \to \int_{\Omega \cap B_{2 \rho}(x_i)} v^p\, dx \le C \rho^p \left(\int_{\Omega \cap B_{2 \rho}(x_i)} v^{p^\ast}\, dx\right)^{p/p^\ast}.
\]
So passing to the limit in \eqref{12} gives
\[
\int_\Omega \varphi_{i,\rho}\, d\kappa - \int_\Omega \varphi_{i,\rho}\, d\nu \le C \left[(1 + \mu \rho) \left(\int_{\Omega \cap B_{2 \rho}(x_i)} v^{p^\ast}\, dx\right)^{1/p^\ast} + \int_{\Omega \cap B_{2 \rho}(x_i)} v^p\, dx\right].
\]
Letting $\rho \searrow 0$ and using \eqref{11} now gives $\kappa_i \le \nu_i$, which together with $\nu_i > 0$ and $\nu_i^{p/p^\ast} \le \kappa_i/S$ then gives $\nu_i \ge S^{N/p}$. On the other hand, passing to the limit in \eqref{9} and using \eqref{10} and \eqref{11} gives
\[
\nu_i \le N \left[c + \left(1 - \frac{1}{p}\right) \mu \vol{\Omega}\right] < S^{N/p}
\]
by \eqref{6}, a contradiction. Hence $I = \emptyset$ and
\begin{equation} \label{13}
\int_\Omega u_{j+}^{p^\ast}\, dx \to \int_\Omega v^{p^\ast}\, dx.
\end{equation}

Passing to a further subsequence, $u_j$ converges to some $u$ weakly in $W^{1,p}_0(\Omega)$, strongly in $L^q(\Omega)$ for all $q \in [1,p^\ast)$, and a.e.\! in $\Omega$. Since
\[
|u_{j+}^{p^\ast - 1}\, (u_j - u)| \le u_{j+}^{p^\ast} + u_{j+}^{p^\ast - 1}\, |u| \le \left(2 - \frac{1}{p^\ast}\right) u_{j+}^{p^\ast} + \frac{1}{p^\ast}\, |u|^{p^\ast}
\]
by Young's inequality,
\[
\int_\Omega u_{j+}^{p^\ast - 1}\, (u_j - u)\, dx \to 0
\]
by \eqref{13} and the dominated convergence theorem. Then taking $v = u_j - u$ in \eqref{8} gives
\[
\int_\Omega |\nabla u_j|^{p-2}\, \nabla u_j \cdot \nabla (u_j - u)\, dx \to 0,
\]
so $u_j \to u$ in $W^{1,p}_0(\Omega)$ for a renamed subsequence (see, e.g., Perera et al.\! \cite[Proposition 1.3]{MR2640827}).
\end{proof}

The infimum in \eqref{5} is attained by the family of functions
\[
u_\eps(x) = \frac{C_{N,p}\, \eps^{(N-p)/p^2}}{(\eps + |x|^{p/(p-1)})^{(N-p)/p}}, \quad \eps > 0
\]
when $\Omega = \R^N$, where the constant $C_{N,p} > 0$ is chosen so that
\[
\int_{\R^N} |\nabla u_\eps|^p\, dx = \int_{\R^N} u_\eps^{p^\ast}\, dx = S^{N/p}.
\]
Without loss of generality, we may assume that $0 \in \Omega$. Let $r > 0$ be so small that $B_{2r}(0) \subset \Omega$, take a function $\psi \in C^\infty_0(B_{2r}(0),[0,1])$ such that $\psi = 1$ on $B_r(0)$, and set
\[
\tilde{u}_\eps(x) = \psi(x)\, u_\eps(x), \qquad v_\eps(x) = \frac{\tilde{u}_\eps(x)}{\left(\dint_\Omega \tilde{u}_\eps^{p^\ast}\, dx\right)^{1/p^\ast}},
\]
so that $\dint_\Omega v_\eps^{p^\ast}\, dx = 1$. Then we have the well-known estimates
\begin{gather}
\label{14} \int_\Omega |\nabla v_\eps|^p\, dx \le S + C \eps^{(N-p)/p},\\[5pt]
\label{15} \int_\Omega v_\eps^p\, dx \ge \begin{cases}
\dfrac{1}{C}\, \eps^{p-1}, & N > p^2\\[10pt]
\dfrac{1}{C}\, \eps^{p-1}\, |\!\log \eps|, & N = p^2,
\end{cases}
\end{gather}
where $C = C(N,p) > 0$ is a constant (see, e.g., Dr{\'a}bek and Huang \cite{MR1473856}).

\section{Proof of Theorem \ref{Theorem 1}}

First we show that $I_\mu$ has a uniformly positive mountain pass level below the threshold for compactness given in Lemma \ref{Lemma 1} for all sufficiently small $\mu > 0$. Let $v_\eps$ be as in the last section.

\begin{lemma} \label{Lemma 2}
There exist $\mu_0, \rho, c_0 > 0$, $R > \rho$, and $\beta < \dfrac{1}{N}\, S^{N/p}$ such that the following hold for all $\mu \in (0,\mu_0)$:
\begin{enumroman}
\item \label{16} $\norm{u} = \rho \implies I_\mu(u) \ge c_0$,
\item \label{17} $I_\mu(t v_\eps) \le 0$ for all $t \ge R$ and $\eps \in (0,1]$,
\item \label{18} denoting by $\Gamma = \set{\gamma \in C([0,1],W^{1,p}_0(\Omega)) : \gamma(0) = 0,\, \gamma(1) = R v_\eps}$ the class of paths joining the origin to $R v_\eps$,
\begin{equation} \label{19}
c_0 \le c_\mu := \inf_{\gamma \in \Gamma}\, \max_{u \in \gamma([0,1])}\, I_\mu(u) \le \beta - \left(1 - \frac{1}{p}\right) \mu \vol{\Omega}
\end{equation}
for all sufficiently small $\eps > 0$,
\item \label{20} $I_\mu$ has a critical point $u_\mu$ at the level $c_\mu$.
\end{enumroman}
\end{lemma}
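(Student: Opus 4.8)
The plan is to establish the standard mountain pass geometry in \ref{16} and \ref{17}, to trap the level $c_\mu$ strictly below the compactness threshold of Lemma \ref{Lemma 1} in \ref{18}, and then to combine the mountain pass theorem with Lemma \ref{Lemma 1} in \ref{20}. I would fix the constants in the order $\rho, c_0$ (from the $\mu$-free part of $I_\mu$), then $R$ (uniformly in $\eps$), then a single small $\eps$, and finally $\beta$ and $\mu_0$; note $I_\mu(0) = 0$, so all four items together give a genuine mountain pass configuration.

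For \ref{16}, I would write $I_\mu(u) = J(u) + \mu\,[\cdots]$ with $J(u) = \frac1p\norm{u}^p - \frac\lambda p\int_\Omega u_+^p - \frac1{p^\ast}\int_\Omega u_+^{p^\ast}$. Using $\int_\Omega u_+^p \le \lambda_1^{-1}\norm{u}^p$ from \eqref{3} and the Sobolev inequality \eqref{5}, one gets $J(u) \ge \frac{\norm{u}^p}p\,(1 - \lambda/\lambda_1) - C\norm{u}^{p^\ast}$; since $\lambda < \lambda_1$ and $p^\ast > p$, shrinking $\rho$ makes $J(u) \ge 2c_0$ on $\norm{u} = \rho$ for a suitable $c_0 > 0$. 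Each integrand in the bracket is bounded on its defining set and $\vol{\Omega} < \infty$, so the bracket is $\ge -C\vol{\Omega}$; hence $I_\mu(u) \ge 2c_0 - C\mu\vol{\Omega}$, and shrinking $\mu_0$ gives $I_\mu(u) \ge c_0$ on the sphere.

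For \ref{17}, since $v_\eps \ge 0$ only the linear $\mu$-term survives, giving $I_\mu(tv_\eps) = \frac ap t^p - \frac1{p^\ast}t^{p^\ast} + \mu t\int_\Omega v_\eps\, dx$ with $a = \norm{v_\eps}^p - \lambda\int_\Omega v_\eps^p\, dx$; as \eqref{14} bounds $\norm{v_\eps}^p$ and H\"older bounds $\int_\Omega v_\eps\, dx$ uniformly for $\eps \in (0,1]$, the dominant $-t^{p^\ast}/p^\ast$ yields a uniform $R$ with $I_\mu(tv_\eps) \le 0$ for $t \ge R$. For the upper bound in \ref{18} I would take the straight path $\gamma(s) = sRv_\eps$, so $c_\mu \le \max_{t\in[0,R]} I_\mu(tv_\eps) \le \frac1N a^{N/p} + \mu R\int_\Omega v_\eps\, dx$, where $\frac1N a^{N/p} = \sup_{t\ge0}\big[\frac ap t^p - \frac1{p^\ast}t^{p^\ast}\big]$. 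The \emph{crux} is the Br\'ezis--Nirenberg estimate: combining \eqref{14} and \eqref{15} with $N \ge p^2$ gives $a \le S + C\eps^{(N-p)/p} - \lambda\int_\Omega v_\eps^p\, dx < S$ for a fixed small $\eps$, hence $\frac1N a^{N/p} < \frac1N S^{N/p}$; I then set $\beta = \frac1N a^{N/p} + \mu_0 R\int_\Omega v_\eps\, dx$ and shrink $\mu_0$ so that $\beta < \frac1N S^{N/p}$, which forces $c_\mu \le \beta - (1-1/p)\mu\vol{\Omega}$. The lower bound $c_0 \le c_\mu$ follows because every $\gamma \in \Gamma$ must cross the sphere $\norm{u} = \rho$ (as $\norm{Rv_\eps} \ge RS^{1/p} > \rho$ for $R$ large), where \ref{16} applies.

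Finally, for \ref{20}, since $\beta < \frac1N S^{N/p}$ the estimate of \ref{18} gives $c_\mu < \frac1N S^{N/p} - (1-1/p)\mu\vol{\Omega}$, so the mountain pass theorem produces a \PS{c_\mu} sequence, which converges by Lemma \ref{Lemma 1}, yielding a critical point $u_\mu$ at level $c_\mu$. I expect the main obstacle to be \ref{18}: obtaining $a < S$ requires the sharp interplay of \eqref{14}--\eqref{15} and the hypothesis $N \ge p^2$, and one must check that the unavoidable positive contribution $\mu t\int_\Omega v_\eps\, dx$ (the semipositone term works against us along the positive ray) is dominated, for fixed $\eps$ and small $\mu$, by the gap $\frac1N S^{N/p} - \frac1N a^{N/p} > 0$.
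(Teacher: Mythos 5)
Your proposal is correct and follows essentially the same route as the paper: the same mountain pass geometry via \eqref{3} and \eqref{5}, the same straight ray $t \mapsto t v_\eps$ combined with the Br\'ezis--Nirenberg estimates \eqref{14}--\eqref{15} under $N \ge p^2$ to push $c_\mu$ below the compactness threshold, and Lemma \ref{Lemma 1} plus the mountain pass lemma for \ref{20}; the only cosmetic difference is the semipositone term, which you bound by $\mu R \int_\Omega v_\eps\, dx$ along the truncated path, while the paper absorbs it via Young's inequality into half of the $\lambda$-term, yielding the $t$-uniform penalty $C_\lambda\, \mu^{p/(p-1)}$ in \eqref{21} (both work). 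One bookkeeping slip to fix: with your $\beta = \frac{1}{N}\, a^{N/p} + \mu_0 R \int_\Omega v_\eps\, dx$, the required bound $c_\mu \le \beta - \left(1 - \frac{1}{p}\right) \mu \vol{\Omega}$ fails as $\mu \uparrow \mu_0$, since it would need $\left(1 - \frac{1}{p}\right) \mu \vol{\Omega} \le (\mu_0 - \mu)\, R \int_\Omega v_\eps\, dx$; instead fix any $\beta$ with $\frac{1}{N}\, a^{N/p} < \beta < \frac{1}{N}\, S^{N/p}$ and then shrink $\mu_0$ so that $\mu_0 \left[R \int_\Omega v_\eps\, dx + \left(1 - \frac{1}{p}\right) \vol{\Omega}\right] \le \beta - \frac{1}{N}\, a^{N/p}$.
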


\begin{proof}
By \eqref{3} and \eqref{5},
\[
I_\mu(u) \ge \frac{1}{p} \left(1 - \frac{\lambda}{\lambda_1}\right)\! \norm{u}^p - \frac{S^{- p^\ast/p}}{p^\ast} \norm{u}^{p^\ast} - \left(1 - \frac{1}{p}\right) \mu \vol{\Omega},
\]
and \ref{16} follows from this for sufficiently small $\rho, c_0, \mu > 0$ since $\lambda < \lambda_1$.

Since $v_\eps \ge 0$,
\[
I_\mu(t v_\eps) = \frac{t^p}{p} \int_\Omega \big(|\nabla v_\eps|^p - \lambda v_\eps^p\big)\, dx - \frac{t^{p^\ast}}{p^\ast} + \mu t \int_\Omega v_\eps\, dx
\]
for $t \ge 0$. By the H\"{o}lder's and Young's inequalities,
\[
\mu t \int_\Omega v_\eps\, dx \le \mu t \vol{\Omega}^{1-1/p} \left(\int_\Omega v_\eps^p\, dx\right)^{1/p} \le C_\lambda\, \mu^{p/(p-1)} + \frac{\lambda t^p}{2p} \int_\Omega v_\eps^p\, dx,
\]
where
\[
C_\lambda = \left(1 - \frac{1}{p}\right)\! \left(\frac{2}{\lambda}\right)^{1/(p-1)} \vol{\Omega},
\]
so
\begin{equation} \label{21}
I_\mu(t v_\eps) \le \frac{t^p}{p} \int_\Omega \left(|\nabla v_\eps|^p - \frac{\lambda}{2}\, v_\eps^p\right) dx - \frac{t^{p^\ast}}{p^\ast} + C_\lambda\, \mu^{p/(p-1)}.
\end{equation}
Then by \eqref{14} and for $\eps, \mu \in (0,1]$,
\[
I_\mu(t v_\eps) \le (S + C)\, \frac{t^p}{p} - \frac{t^{p^\ast}}{p^\ast} + C_\lambda,
\]
from which \ref{17} follows for sufficiently large $R > \rho$.

The first inequality in \eqref{19} is immediate from \ref{16} since $R > \rho$. Maximizing the right-hand side of \eqref{21} over $t \ge 0$ gives
\[
c_\mu \le \frac{1}{N} \left[\int_\Omega \left(|\nabla v_\eps|^p - \frac{\lambda}{2}\, v_\eps^p\right) dx\right]^{N/p} + C_\lambda\, \mu^{p/(p-1)},
\]
and \eqref{14} and \eqref{15} imply that the integral on the right-hand side is strictly less than $S$ for all sufficiently small $\eps > 0$ since $N \ge p^2$ and $\lambda > 0$, so the second inequality in \eqref{19} holds for sufficiently small $\mu > 0$.

Finally, \ref{20} follows from \ref{16}--\ref{18}, Lemma \ref{Lemma 1}, and the mountain pass lemma (see Ambrosetti and Rabinowitz \cite{MR0370183}).
\end{proof}

Next we show that $u_\mu$ is uniformly bounded in $W^{1,p}_0(\Omega)$ and uniformly equi-integrable in $L^{p^\ast}(\Omega)$, and hence also uniformly bounded in $C^{1,\alpha}(\closure{\Omega})$ for some $\alpha \in (0,1)$ by de Figueiredo et al.\! \cite[Proposition 3.7]{MR2530603}, for all sufficiently small $\mu \in (0,\mu_0)$.

\begin{lemma} \label{Lemma 3}
There exists $\mu_\ast \in (0,\mu_0]$ such that the following hold for all $\mu \in (0,\mu_\ast)$:
\begin{enumroman}
\item \label{22} $u_\mu$ is uniformly bounded in $W^{1,p}_0(\Omega)$,
\item \label{23} $\dint_E |u_\mu|^{p^\ast} dx \to 0$ as $\vol{E} \to 0$, uniformly in $\mu$,
\item \label{24} $u_\mu$ is uniformly bounded in $C^{1,\alpha}(\closure{\Omega})$ for some $\alpha \in (0,1)$.
\end{enumroman}
\end{lemma}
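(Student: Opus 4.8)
The plan is to establish all three conclusions directly for the mountain pass critical points $u_\mu$ furnished by Lemma \ref{Lemma 2}\ref{20}, taking $\mu_\ast = \mu_0$. The single ingredient driving everything is that, by the second inequality in \eqref{19}, the critical levels obey the \emph{uniform} bound $c_\mu + \left(1 - \frac{1}{p}\right) \mu \vol{\Omega} \le \beta < \frac{1}{N}\, S^{N/p}$ for all $\mu \in (0,\mu_0)$. Items \ref{22} and \ref{24} are comparatively routine; the genuinely delicate point is \ref{23}, which is where a second application of concentration compactness enters.

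First I would prove \ref{22}. Since $u_\mu$ is a critical point, $I_\mu'(u_\mu) = 0$ \emph{exactly}, so the computation producing \eqref{9} now has no error term and gives the clean inequality $\frac{1}{N} \int_\Omega u_{\mu+}^{p^\ast}\, dx \le c_\mu + \left(1 - \frac{1}{p}\right) \mu \vol{\Omega} \le \beta$, whence $\int_\Omega u_{\mu+}^{p^\ast}\, dx \le N\beta$ uniformly in $\mu$. Substituting this back into the identity $I_\mu(u_\mu) = c_\mu$ from \eqref{7} and solving for $\norm{u_\mu}^p$, I would control $\int_\Omega u_{\mu+}^p\, dx$ and $\int_\Omega u_{\mu+}\, dx$ by the uniform bound on $\int_\Omega u_{\mu+}^{p^\ast}\, dx$ through H\"older's inequality, and bound the bracketed $\mu$-terms crudely (the last two are $\o(1)$ in $\mu$), which yields a uniform bound on $\norm{u_\mu}$.

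For \ref{23} I would argue by contradiction. If \ref{23} failed, there would exist $\eta > 0$, a sequence $\mu_j \in (0,\mu_\ast)$, and sets $E_j \subset \Omega$ with $\vol{E_j} \to 0$ but $\int_{E_j} \abs{u_{\mu_j}}^{p^\ast}\, dx \ge \eta$; it then suffices to extract a subsequence along which $u_{\mu_j} \to u$ in $L^{p^\ast}(\Omega)$, since $\abs{u_{\mu_j}}^{p^\ast} \to \abs{u}^{p^\ast}$ in $L^1(\Omega)$ would follow and the equi-integrability of an $L^1$-convergent sequence contradicts $\eta$. By \ref{22} I may assume $u_{\mu_j} \rightharpoonup u$ in $W^{1,p}_0(\Omega)$, a.e.\ and in $L^q$ for $q < p^\ast$, with $u_{\mu_j+}^{p^\ast}\, dx \wstar \nu$ and $|\nabla u_{\mu_j+}|^p\, dx \wstar \kappa$ as in \eqref{10}, and I invoke the decomposition \eqref{11}. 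The crux is to rule out atoms exactly as in Lemma \ref{Lemma 1}: localizing \eqref{8} with $\varphi_{i,\rho}\, u_{\mu_j+}$ and letting $j \to \infty$ then $\rho \searrow 0$ gives $\kappa_i \le \nu_i$ and hence $\nu_i \ge S^{N/p}$ (the $\mu_j$-terms are harmless since $\mu_j$ stays bounded), while passing to the limit in the \ref{22}-estimate gives $\nu_i \le N\big[c_{\mu_j} + \left(1 - \frac{1}{p}\right) \mu_j \vol{\Omega}\big] \le N\beta < S^{N/p}$, a contradiction. Thus $\nu = u_+^{p^\ast}\, dx$ and $\int_\Omega u_{\mu_j+}^{p^\ast}\, dx \to \int_\Omega u_+^{p^\ast}\, dx$ as in \eqref{13}, so $u_{\mu_j+} \to u_+$ in $L^{p^\ast}(\Omega)$.

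To upgrade this to strong convergence of the full functions I would test $I_{\mu_j}'(u_{\mu_j}) = 0$ with $u_{\mu_j} - u$: the critical term vanishes in the limit exactly as in the last paragraph of the proof of Lemma \ref{Lemma 1} (using the convergence just obtained), the subcritical $\lambda$-term vanishes because $u_{\mu_j} - u \to 0$ in $L^q$ for $q < p^\ast$ while $u_{\mu_j+}^{p-1}$ is bounded in $L^{p^\ast/(p-1)}$, and the $\mu_j$-terms vanish because $u_{\mu_j} - u \to 0$ in $L^1$ with $\mu_j$ bounded. Hence $\int_\Omega |\nabla u_{\mu_j}|^{p-2}\, \nabla u_{\mu_j} \cdot \nabla(u_{\mu_j} - u)\, dx \to 0$, and the $(S_+)$ property of $- \Delta_p$ (Perera et al.\ \cite[Proposition 1.3]{MR2640827}) gives $u_{\mu_j} \to u$ in $W^{1,p}_0(\Omega)$, hence in $L^{p^\ast}(\Omega)$, proving \ref{23}. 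Finally, \ref{24} is immediate: by \ref{22} and \ref{23} the $u_\mu$ solve \eqref{4} with right-hand sides of critical growth that are uniformly bounded in $W^{1,p}_0(\Omega)$ and uniformly equi-integrable in $L^{p^\ast}(\Omega)$, so the uniform $C^{1,\alpha}(\closure{\Omega})$ bound follows from de Figueiredo et al.\ \cite[Proposition 3.7]{MR2530603}. The main obstacle is the no-atom step in \ref{23}: what makes the concentration compactness argument close is precisely that the threshold $\beta$ is uniform over the entire range $\mu \in (0,\mu_0)$, not merely along $\mu_j \to 0$.
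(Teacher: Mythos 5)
Your proposal is correct and follows essentially the same route as the paper: the uniform level bound $c_\mu + (1 - 1/p)\mu\vol{\Omega} \le \beta < \frac{1}{N}S^{N/p}$ from \eqref{19} gives \ref{22} via the critical point equations and H\"older, \ref{23} is proved by the same contradiction argument ruling out concentration atoms and upgrading to strong $W^{1,p}_0(\Omega)$ convergence via the $(S_+)$ property, and \ref{24} invokes the same result of de Figueiredo et al. Your only deviation is a point of extra care: you negate \ref{23} for an arbitrary sequence $\mu_j \in (0,\mu_\ast)$ rather than the paper's stated $\mu_j \to 0$, correctly observing that the argument needs only the uniform threshold $\beta$ over all of $(0,\mu_0)$ --- which is in fact the logically precise form of the contradiction hypothesis, and the paper's argument covers it for the same reason.
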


\begin{proof}
We have
\begin{multline} \label{25}
I_\mu(u_\mu) = \int_\Omega \bigg(\frac{|\nabla u_\mu|^p}{p} - \frac{\lambda u_{\mu +}^p}{p} - \frac{u_{\mu +}^{p^\ast}}{p^\ast}\bigg)\, dx + \mu\, \bigg[\int_{\set{u_\mu \ge 0}} u_\mu\, dx\\[10pt]
+ \int_{\set{-1 < u_\mu < 0}} \left(u_\mu - \frac{|u_\mu|^{p-1}\, u_\mu}{p}\right) dx - \left(1 - \frac{1}{p}\right) \vol{\set{u_\mu \le -1}}\bigg] = c_\mu
\end{multline}
and
\begin{multline} \label{26}
I_\mu'(u_\mu)\, v = \int_\Omega \left(|\nabla u_\mu|^{p-2}\, \nabla u_\mu \cdot \nabla v - \lambda u_{\mu +}^{p-1}\, v - u_{\mu +}^{p^\ast - 1}\, v\right) dx + \mu\, \bigg[\int_{\set{u_\mu \ge 0}} v\, dx\\[10pt]
+ \int_{\set{-1 < u_\mu < 0}} \left(1 - |u_\mu|^{p-1}\right) v\, dx\bigg] = 0 \quad \forall v \in W^{1,p}_0(\Omega).
\end{multline}
Taking $v = u_\mu$ in \eqref{26}, dividing by $p$, and subtracting from \eqref{25} gives
\begin{equation} \label{27}
\frac{1}{N} \int_\Omega u_{\mu +}^{p^\ast}\, dx \le c_\mu + \left(1 - \frac{1}{p}\right) \mu \vol{\Omega} \le \beta
\end{equation}
by \eqref{19}, and \ref{22} follows from this, \eqref{26} with $v = u_\mu$, and the H\"{o}lder inequality.

If \ref{23} does not hold, then there exist sequences $\mu_j \to 0$ and $\seq{E_j}$ with $\vol{E_j} \to 0$ such that
\begin{equation} \label{28}
\varliminf \int_{E_j} |u_{\mu_j}|^{p^\ast} dx > 0.
\end{equation}
Since $\seq{u_{\mu_j}}$ is bounded by \ref{22}, so is $\seq{u_{\mu_j +}}$, a renamed subsequence of which then converges to some $v \ge 0$ weakly in $W^{1,p}_0(\Omega)$, strongly in $L^q(\Omega)$ for all $q \in [1,p^\ast)$ and a.e.\! in $\Omega$, and
\begin{equation} \label{29}
|\nabla u_{\mu_j +}|^p\, dx \wstar \kappa, \qquad u_{\mu_j +}^{p^\ast}\, dx \wstar \nu
\end{equation}
in the sense of measures, where $\kappa$ and $\nu$ are bounded nonnegative measures on $\closure{\Omega}$. By Lions \cite{MR834360,MR850686}, then there exist an at most countable index set $I$ and points $x_i \in \closure{\Omega},\, i \in I$ such that
\begin{equation} \label{30}
\kappa \ge |\nabla v|^p\, dx + \sum_{i \in I} \kappa_i\, \delta_{x_i}, \qquad \nu = v^{p^\ast}\, dx + \sum_{i \in I} \nu_i\, \delta_{x_i},
\end{equation}
where $\kappa_i, \nu_i > 0$ and $\nu_i^{p/p^\ast} \le \kappa_i/S$. Suppose $I$ is nonempty, say, $i \in I$. An argument similar to that in the proof of Lemma \ref{Lemma 1} shows that $\kappa_i \le \nu_i$, so $\nu_i \ge S^{N/p}$. On the other hand, passing to the limit in \eqref{27} with $\mu = \mu_j$ and using \eqref{29} and \eqref{30} gives $\nu_i \le N \beta < S^{N/p}$, a contradiction. Hence $I = \emptyset$ and
\[
\int_\Omega u_{\mu_j +}^{p^\ast}\, dx \to \int_\Omega v^{p^\ast}\, dx.
\]
As in the proof of Lemma \ref{Lemma 1}, a further subsequence of $\seq{u_{\mu_j}}$ then converges to some $u$ in $W^{1,p}_0(\Omega)$, and hence also in $L^{p^\ast}(\Omega)$, and a.e.\! in $\Omega$. Then
\[
\int_{E_j} |u_{\mu_j}|^{p^\ast} dx \le \int_\Omega \abs{|u_{\mu_j}|^{p^\ast} - |u|^{p^\ast}} dx + \int_{E_j} |u|^{p^\ast} dx \to 0,
\]
contradicting \eqref{28}.

Finally, \ref{24} follows from \ref{22}, \ref{23}, and de Figueiredo et al.\! \cite[Proposition 3.7]{MR2530603}.
\end{proof}

We are now ready to prove Theorem \ref{Theorem 1}.

\begin{proof}[Proof of Theorem \ref{Theorem 1}]
We claim that $u_\mu$ is positive in $\Omega$, and hence a weak solution of problem \eqref{2}, for all sufficiently small $\mu \in (0,\mu_\ast)$. It suffices to show that for every sequence $\mu_j \to 0$, a subsequence of $u_{\mu_j}$ is positive in $\Omega$. By Lemma \ref{Lemma 3} \ref{24}, a renamed subsequence of $u_{\mu_j}$ converges to some $u$ in $C^1_0(\closure{\Omega})$. We have
\begin{multline*}
I_{\mu_j}(u_{\mu_j}) = \int_\Omega \bigg(\frac{|\nabla u_{\mu_j}|^p}{p} - \frac{\lambda u_{\mu_j +}^p}{p} - \frac{u_{\mu_j +}^{p^\ast}}{p^\ast}\bigg)\, dx + \mu_j\, \bigg[\int_{\set{u_{\mu_j} \ge 0}} u_{\mu_j}\, dx\\[10pt]
+ \int_{\set{-1 < u_{\mu_j} < 0}} \left(u_{\mu_j} - \frac{|u_{\mu_j}|^{p-1}\, u_{\mu_j}}{p}\right) dx - \left(1 - \frac{1}{p}\right) \vol{\set{u_{\mu_j} \le -1}}\bigg] = c_{\mu_j} \ge c_0
\end{multline*}
by \eqref{19} and
\begin{multline*}
I_{\mu_j}'(u_{\mu_j})\, v = \int_\Omega \left(|\nabla u_{\mu_j}|^{p-2}\, \nabla u_{\mu_j} \cdot \nabla v - \lambda u_{\mu_j +}^{p-1}\, v - u_{\mu_j +}^{p^\ast - 1}\, v\right) dx + \mu_j\, \bigg[\int_{\set{u_{\mu_j} \ge 0}} v\, dx\\[10pt]
+ \int_{\set{-1 < u_{\mu_j} < 0}} \left(1 - |u_{\mu_j}|^{p-1}\right) v\, dx\bigg] = 0 \quad \forall v \in W^{1,p}_0(\Omega),
\end{multline*}
and passing to the limits gives
\[
\int_\Omega \bigg(\frac{|\nabla u|^p}{p} - \frac{\lambda u_+^p}{p} - \frac{u_+^{p^\ast}}{p^\ast}\bigg)\, dx \ge c_0
\]
and
\[
\int_\Omega \left(|\nabla u|^{p-2}\, \nabla u \cdot \nabla v - \lambda u_+^{p-1}\, v - u_+^{p^\ast - 1}\, v\right) dx = 0 \quad \forall v \in W^{1,p}_0(\Omega),
\]
so $u$ is a nontrivial weak solution of the problem
\[
\left\{\begin{aligned}
- \Delta_p\, u & = \lambda u_+^{p-1} + u_+^{p^\ast - 1} && \text{in } \Omega\\[5pt]
u & = 0 && \text{on } \bdry{\Omega}.
\end{aligned}\right.
\]
Then $u > 0$ in $\Omega$ and its interior normal derivative $\partial u/\partial \nu > 0$ on $\partial \Omega$ by the strong maximum principle and the Hopf lemma for the $p$-Laplacian (see V{\'a}zquez \cite{MR768629}). Since $u_{\mu_j} \to u$ in $C^1_0(\closure{\Omega})$, then $u_{\mu_j} > 0$ in $\Omega$ for all sufficiently large $j$.

It remains to show that $u_\mu$ minimizes $I_\mu$ on $\calN_\mu$ when it is positive. For each $w \in \calN_\mu$, we will construct a path $\gamma_w \in \Gamma$ such that
\[
\max_{u \in \gamma_w([0,1])}\, I_\mu(u) = I_\mu(w).
\]
Since
\[
I_\mu(u_\mu) = c_\mu \le \max_{u \in \gamma_w([0,1])}\, I_\mu(u)
\]
by the definition of $c_\mu$, the desired conclusion will then follow. First we note that the function
\[
g(t) = I_\mu(tw) = \frac{t^p}{p} \int_\Omega \big(|\nabla w|^p - \lambda w^p\big)\, dx - \frac{t^{p^\ast}}{p^\ast} \int_\Omega w^{p^\ast}\, dx + \mu t \int_\Omega w\, dx, \quad t \ge 0
\]
has a unique maximum at $t = 1$. Indeed,
\begin{multline*}
g'(t) = t^{p-1} \int_\Omega \big(|\nabla w|^p - \lambda w^p\big)\, dx - t^{p^\ast - 1} \int_\Omega w^{p^\ast}\, dx + \mu \int_\Omega w\, dx\\[10pt]
= \left(t^{p-1} - t^{p^\ast - 1}\right) \int_\Omega \big(|\nabla w|^p - \lambda w^p\big)\, dx + \left(1 - t^{p^\ast - 1}\right) \mu \int_\Omega w\, dx
\end{multline*}
since $w \in \calN_\mu$, and the last two integrals are positive since $\lambda < \lambda_1$ and $w > 0$, so $g'(t) > 0$ for $0 \le t < 1$, $g'(1) = 0$, and $g'(t) < 0$ for $t > 1$. Hence
\[
\max_{t \ge 0}\, I_\mu(tw) = I_\mu(w) > 0
\]
since $g(0) = 0$. In view of Lemma \ref{Lemma 2} \ref{17}, now it suffices to observe that there exists $\widetilde{R} > \max \set{1,R}$ such that
\[
I_\mu(\widetilde{R} u) = \frac{\widetilde{R}^p}{p} \int_\Omega \big(|\nabla u|^p - \lambda u^p\big)\, dx - \frac{\widetilde{R}^{p^\ast}}{p^\ast} \int_\Omega u^{p^\ast}\, dx + \mu \widetilde{R} \int_\Omega u\, dx \le 0
\]
for all $u$ on the line segment joining $w$ to $v_\eps$ since all norms on a finite dimensional space are equivalent.
\end{proof}

\subsection*{Acknowledgement}
The third author was supported by the National Research Foundation of Korea Grant funded by the Korea Government (MEST) (NRF-2015R1D1A3A01019789).

\def\cdprime{$''$}

\end{document}